\newtheorem{theorem}{Theorem}[section]
\newtheorem{lemma}[theorem]{Lemma}
\newtheorem{proposition}[theorem]{Proposition}
\theoremstyle{definition}
\newtheorem{definition}[theorem]{Definition}
\theoremstyle{remark}
\newtheorem{remark}[theorem]{Remark}
\numberwithin{equation}{section}
\begin{document}

\title{An Optimal Plank Theorem}


\author{}
\address{}
\curraddr{}
\email{}
\thanks{}

\author{Oscar Ortega-Moreno}
\address{}
\curraddr{}
\email{}
\thanks{The author was supported by the  Austrian  Science  Fund  (FWF),  Projectnumber: P31448-N35, and the Mexican National Council of Science and
Technology (CONACYT), grant number: CVU579817.}

\subjclass[2010]{Primary 52C99,46C05}

\date{}

\dedicatory{}

\commby{}

\begin{abstract}
We give a new proof of Fejes T\'oth's zone conjecture: for any sequence $v_1,v_2,...,v_n$ of unit vectors in a real Hilbert space $\mathcal{H}$, there exists a unit vector $v$ in $\mathcal{H}$ such that 
\begin{equation*}
	|\langle v_k,v \rangle| \geq \sin(\pi/2n)
\end{equation*}
for all $k$. This can be seen as sharp version of the plank theorem for real Hilbert spaces. Our approach is inspired by Ball's solution to the complex plank problem and thus unifies both the complex and the real solution under the same method.
\end{abstract}

\maketitle

\section{Introduction}

	A plank in a vector space $X$ is the region bounded by two parallel hyperplanes. The classic plank problem, conjectured by Tarski in 1932, states that if an $n$-dimensional convex body is covered by a collection of planks, then the sum of the widths of the planks should be at least the minimal width of the convex body they cover. Tarski proved it for the particular cases of the unit disc and the $3$-dimensional solid sphere. In 1951, Bang \cite{BG51} solved the problem for arbitrary convex bodies. Bang \cite{BG51} also asked whether the widths of the planks could be measured with respect to the convex body that it is covered. Ball \cite{B91} answered affirmatively this affine version of the plank problem for the most interesting case: when the convex body in question is symmetric. Ball's plank theorem can be seen as a generalization of the Hahn--Banach theorem, a sharp quantitative version of the uniform boundedness principle, or a geometric pigeon--hole principle. For planks that are symmetric about the origen, Ball's theorem states the following.

\begin{theorem}[The Plank Theorem]
For any sequence $(\phi_k)_{k = 1}^\infty$ of norm one functionals on a (real) Banach space $X$ and non-negative numbers $(t_k)_{k=1}^\infty$ satisfying 
\begin{equation*}
	\sum_{k=1}^{\infty} t_k < 1,
\end{equation*}  
there exists a unit vector $x$ in $X$ for which 
\begin{equation*}
	|\phi_j(x)|> t_j
\end{equation*}
for every $j$.
\end{theorem}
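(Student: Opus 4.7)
The plan is to argue by contradiction. Suppose the closed planks $P_k := \{y \in X : |\phi_k(y) - m_k| \le t_k\}$ cover the unit ball $B_X$ while $T := \sum_k t_k < 1$. By a standard limiting and compactness argument I reduce to finitely many planks $P_1, \ldots, P_n$ with $\sum_{k=1}^n t_k < 1$, and by replacing $\phi_k$ with $-\phi_k$ when necessary (which does not change $P_k$) I may also assume $m_k \ge 0$ and, after intersecting with $B_X$, that $[m_k - t_k, m_k + t_k] \subset [-1,1]$ for every $k$.

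The heart of the argument is to introduce positive weights $\alpha_1, \ldots, \alpha_n$ (to be tuned later) and study the single functional $\Phi_\alpha := \sum_k \alpha_k \phi_k$. By duality there exists a near-norming unit vector $x^* \in B_X$ with $\Phi_\alpha(x^*) \ge \|\Phi_\alpha\| - \varepsilon$ for arbitrarily small $\varepsilon > 0$. The covering hypothesis then forces $x^* \in P_{k_0}$ for some index $k_0$, which pins the coordinate $\phi_{k_0}(x^*)$ to the interval $[m_{k_0} - t_{k_0}, m_{k_0} + t_{k_0}]$ of length $2t_{k_0}$ rather than to the trivial range $[-1,1]$. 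Combined with the a priori bounds $|\phi_k(x^*)| \le 1$ for the remaining $k$, this strictly improves the generic estimate $\Phi_\alpha(x^*) \le \sum_k \alpha_k$ by an amount controlled by $\alpha_{k_0}(1 - t_{k_0})$.

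The delicate step is to choose the weights $(\alpha_k)$ so that the same improved upper bound on $\|\Phi_\alpha\|$ is obtained regardless of which plank $P_{k_0}$ the norming vector happens to land in, and then to confront this estimate with a matching lower bound on $\|\Phi_\alpha\|$ obtained by iterating the construction with reversed signs (or by testing against a norming vector for $-\Phi_\alpha$, which must also lie in some plank). The arithmetic of these two competing bounds should collapse to the inequality $T \ge 1$, contradicting $T < 1$. The principal obstacle is closing the weight-balancing tightly enough to extract the sharp bound $T \ge 1$ rather than a weaker variant; Ball's original argument accomplishes this by an induction on $n$ whose inductive step prunes an $n$-plank covering down to an $(n-1)$-plank covering with total width still at most $1$, exploiting the extremal properties of one carefully chosen plank. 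Crucially, the whole scheme uses only the dual pairing and the Hahn--Banach theorem, which is why it remains valid for arbitrary real Banach spaces and does not require any inner-product structure.
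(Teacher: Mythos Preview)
First, note that the paper does not prove this theorem at all: it is quoted in the introduction as Ball's result from \cite{B91} and serves only as background. There is therefore no ``paper's own proof'' to compare against; the paper's new arguments concern Theorem~\ref{mainthm} (the Fejes T\'oth zone conjecture), proved via inverse eigenvectors and trigonometric polynomials, a method entirely unrelated to the general Banach-space plank theorem.

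Turning to your proposal on its own merits: what you have written is a plan, not a proof, and the plan has a genuine gap at its core. You introduce a single weighted functional $\Phi_\alpha=\sum_k\alpha_k\phi_k$, take a near-norming vector $x^*$, observe that $x^*$ lies in some plank $P_{k_0}$, and then say that a suitable choice of weights ``should collapse'' the resulting arithmetic into $T\ge 1$. But you never carry out the weight-balancing, and in fact no choice of weights makes this one-functional argument work: knowing $x^*\in P_{k_0}$ pins only the single coordinate $\phi_{k_0}(x^*)$, while the other coordinates are free to be anywhere in $[-1,1]$, so the ``improvement'' you record is only $\alpha_{k_0}(1-m_{k_0}-t_{k_0})$ and there is no competing lower bound on $\|\Phi_\alpha\|$ sharp enough to force $T\ge 1$. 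Testing $-\Phi_\alpha$ in the same way just produces a second inequality with a possibly different index $k_1$, and the two do not combine into a contradiction.

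Your description of Ball's actual argument is also off. The proof in \cite{B91} (following Bang) is \emph{not} an induction that prunes one plank at a time; it is a discrete optimisation over the $2^n$ sign vectors $\varepsilon\in\{-1,1\}^n$. One fixes vectors $u_k$ with $\phi_k(u_k)$ close to $1$, forms the $2^n$ test points of the shape $x_0+\sum_k \varepsilon_k t_k u_k$, and selects the sign vector that maximises a quadratic form built from the $\phi_j(u_k)$. Bang's lemma then guarantees that the maximising choice of signs yields a point avoiding every plank. The combinatorics over all $2^n$ sign patterns is essential; replacing it by a single functional $\Phi_\alpha$ discards exactly the mechanism that makes the proof go through.
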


For an arbitrary Banach space, the condition that the sequence of positive numbers $(t_k)_{k = 1}^\infty$ add up to at most $1$ is sharp. This can be seen by taking the space $X$ to be $\ell_1$ and the collection $\phi_i$ to be the standard basis vectors  in $\ell_\infty$. 
For other spaces, such as  Hilbert spaces, one might expect to be able to improve upon this condition. Ball \cite{B01} proved    that for \emph{complex} Hilbert spaces it is possible to \emph{beat} any sequence for which $\sum_{k} t_k^2 = 1$.

\begin{theorem}[The Complex Plank Theorem \cite{B01}]\label{complexplank}
For any sequence $v_1,v_2,\dots, v_n$ of unit vectors in a complex Hilbert space $\mathcal{H}$ and positive real numbers $t_1,t_2,\dots,t_n$ satisfying  
\begin{equation*}
	\sum_{k=1}^n t^2_k = 1
\end{equation*}
there exists a unit vector $z\in\mathcal{H}$ such that 
\begin{equation*}
	|\langle v_k,z \rangle |\geq t_k
\end{equation*}
for all $k$.
\end{theorem}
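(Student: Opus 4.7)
My plan is variational: find the desired vector $z$ as a maximizer of a weighted geometric mean over the complex unit sphere. After restricting to the finite-dimensional subspace $V$ spanned by $v_1,\dots,v_n$ — any component of $z$ orthogonal to $V$ only wastes norm without improving any inner product — I reduce to a problem on the unit sphere of $V \simeq \mathbb{C}^d$. On this sphere I consider the continuous function
\begin{equation*}
F(z) \;=\; \prod_{k=1}^{n} |\langle v_k, z\rangle|^{2t_k^2},
\end{equation*}
whose exponents are dictated by the normalization $\sum t_k^2 = 1$: with this choice $F$ is homogeneous of degree $2$, matching the sphere constraint, and it forces the Lagrange multiplier at any critical point to equal $1$. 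By compactness $F$ attains its maximum at some $z_0$, and my aim is to show that $z_0$ itself satisfies $|\langle v_k, z_0\rangle| \geq t_k$ for every $k$.

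The first concrete step is to extract the Euler--Lagrange identity at $z_0$. Applying Wirtinger calculus to $\log F$ under the constraint $\|z\|^2 = 1$, and writing $\alpha_k := \langle v_k, z_0\rangle$, stationarity becomes
\begin{equation*}
z_0 \;=\; \sum_{k=1}^{n} \frac{t_k^2}{\alpha_k}\, v_k.
\end{equation*}
Pairing with $z_0$ recovers $\sum_k t_k^2 = 1$ (fixing the multiplier), while pairing with each $v_j$ yields the auxiliary relation $\overline{\alpha_j} = \sum_k (t_k^2/\alpha_k)\langle v_k, v_j\rangle$. As a sanity check, when the $v_k$ are orthonormal these relations collapse directly to $|\alpha_k|^2 = t_k^2$, so the conclusion holds with equality — a good sign that the right object is being maximized.

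The main obstacle is promoting these scalar identities to the pointwise lower bounds $|\alpha_k| \geq t_k$ in the general, non-orthogonal case. A naive Cauchy--Schwarz applied to the identity for $\overline{\alpha_j}$ in fact produces an \emph{upper} bound on $|\alpha_j|$ (sharp in the orthonormal case), so the lower bound must come from a more delicate argument. My plan here is a contradiction by perturbation: assuming $|\alpha_j| < t_j$ for some $j$, I would construct an explicit tangent direction at $z_0$ — most naturally the projection of $v_j$ onto $z_0^{\perp}$ rescaled by a combination of the other $\alpha_k$'s — and, using the Lagrange identity together with the hypothesis $\sum t_k^2 = 1$, show that the second-order variation of $\log F$ along this direction is strictly positive, contradicting the maximality of $z_0$. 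Since first-order variations vanish at $z_0$ by stationarity, the decisive input is the sign of this second-order term, and arranging it to be positive precisely when some $|\alpha_j|$ falls short of $t_j$ is the technical heart of the argument and the step I expect to be the hardest. A natural fallback, should the perturbation bookkeeping prove unwieldy, is to maximize instead the Chebyshev-type quantity $\min_k |\langle v_k, z\rangle|/t_k$ and analyze the active-index set at the maximizer by a KKT/Helly-style argument.
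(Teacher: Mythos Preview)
The paper does not prove Theorem~\ref{complexplank}; it is quoted from Ball~\cite{B01} as background, so there is no ``paper's own proof'' to match. What the paper does say about Ball's argument, however, is informative for assessing your plan: the variational setup (maximizing a product of inner products, leading to the Lagrange identity $z_0=\sum_k \tfrac{t_k^2}{\alpha_k}v_k$, i.e.\ the inverse-eigenvector equation) is exactly the first stage of Ball's proof and of the method this paper builds on. So your opening moves are on the right track and coincide with the standard approach.

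The genuine gap is in your ``main obstacle'' step. You propose to derive $|\alpha_j|\ge t_j$ by a purely local second-order perturbation at the maximizer, but you do not carry this out, and there is good reason to think it will not go through as stated. Ball's actual argument at this point is \emph{non-local}: one rewrites the problem via the Gram matrix, obtains a one-complex-variable polynomial (a slice of the product through $\mathbf 1$), and uses the maximum modulus principle to jump from a neighbourhood of the basepoint to a distant point in the unit disc where the polynomial is large---this is precisely what the present paper highlights as the step with ``no analogue in the real case.'' A second-variation computation only tells you the Hessian of $\log F$ restricted to the sphere is nonpositive at $z_0$; arranging a \emph{strictly positive} second variation from the single hypothesis $|\alpha_j|<t_j$ requires controlling cross-terms $\langle v_k,v_\ell\rangle$ that your identities do not pin down, and there is no evident mechanism that converts the deficit in one coordinate into a favourable sign without some global input (like maximum modulus) that goes beyond Taylor expansion at $z_0$. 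Your Chebyshev/KKT fallback is also not known to yield the sharp constant. In short: the reduction is correct, but the decisive analytic step is missing, and the intended substitute (local second-order analysis) is not the tool that is known to close the argument.
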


On the other hand, for \emph{real} Hilbert spaces, the same statement does not hold because of the following construction: consider $2n$ vectors $v_1,v_2, \dots, v_{2n}$ in $\mathbb{R}^2$ equally spaced around the circle: ($n$ vectors and their negatives). For any unit vector $v$ in $\mathbb{R}^2$ there is a $k$ such that 
\begin{equation*}
	|\langle v_k , v \rangle |\leq \sin(\pi /2n). 
\end{equation*}
	
This simple observation is connected to a conjecture by Fejes T\'oth which was positively answered, about two years ago, by Jiang and Polyanskii in \cite{JP2017}. A zone of width $w$ is the set of points in the unit sphere within spherical distance $w/2$ of a given great circle. In 1973, Fejes T\'oth conjectured that if a collection of zones of equal angular width covers the unit sphere then the angular width of the zones should be at least $\pi/n$.

A zone of spherical width $w$ associated to the great circle normal to a unit vector $v$ is the set given by 
\begin{equation*}
	\{x\in \mathbb{S}^2: |\langle v ,x\rangle|\leq \sin(w/2)\}.
\end{equation*}     

With this notation, Fejes T\'oth's conjecture can be restated and generalized as an optimal plank theorem for real Hilbert spaces. 

\begin{theorem}\label{mainthm}
For any sequence $v_1,v_2,\dots,v_n$ of unit vectors in a real Hilbert space $\mathcal{H}$, there exists a unit vector $v$ in $\mathcal{H}$ such that 
\begin{equation*}
	|\langle v_k,v \rangle| \geq \sin(\pi/2n)
\end{equation*}
for all $k$.  
\end{theorem}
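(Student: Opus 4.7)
The plan is variational. Consider the polynomial $p(v) = \prod_{k=1}^n \langle v_k, v\rangle$ on the unit sphere of $H$. Since $p$ depends only on the projection of $v$ onto $\mathrm{span}(v_1, \ldots, v_n)$, I may assume $H$ is finite-dimensional; compactness then yields a unit vector $v^{\ast}$ maximizing $|p|$. Because each $v_k$ is nonzero, $p$ is a nontrivial polynomial and $p(v^{\ast}) \neq 0$. After replacing each $v_k$ by $-v_k$ if necessary, I may assume $a_k := \langle v_k, v^{\ast}\rangle > 0$ for all $k$, so the theorem reduces to showing $a_k \geq \sin(\pi/(2n))$ for every $k$, with $v := v^{\ast}$ as the witness. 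The case $a_\ell = 1$ being trivial, fix an $\ell$ with $a_\ell < 1$.

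The central device is a two-dimensional reduction. For any unit vector $u \in H$ with $u \perp v^{\ast}$, the restriction of $p$ to the great circle $\theta \mapsto \cos\theta\cdot v^{\ast} + \sin\theta\cdot u$ is the trigonometric polynomial $h_u(\theta) = \prod_{k=1}^n (a_k \cos\theta + b_k \sin\theta)$ of degree $n$, where $b_k := \langle v_k, u\rangle$; by the maximality of $v^{\ast}$, $|h_u(\theta)| \leq h_u(0) = \prod_k a_k$ for every $\theta$. To extract a bound on $a_\ell$ I take $u = u_\ell := (v_\ell - a_\ell v^{\ast})/\sqrt{1-a_\ell^2}$, the unit vector in $\mathrm{span}(v^{\ast}, v_\ell)$ orthogonal to $v^{\ast}$. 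For this choice the $\ell$-th factor of $h_{u_\ell}$ is exactly $\cos(\theta - \arccos a_\ell)$, so $h_{u_\ell}$ has a zero at $\theta_\ell := \arccos(a_\ell) - \pi/2 \in [-\pi/2, 0]$. The inequality $a_\ell \geq \sin(\pi/(2n))$ is equivalent to $|\theta_\ell| \geq \pi/(2n)$: this zero must lie at distance at least $\pi/(2n)$ from the global maximum at $\theta = 0$.

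The main obstacle is therefore the following extremal statement for trigonometric polynomials: \emph{if $h = c \prod_{k=1}^n \cos(\theta - \phi_k)$ attains its maximum absolute value on $\mathbb{R}$ at $\theta = 0$, then every zero of $h$ lies at distance at least $\pi/(2n)$ from $0$ modulo $\pi$.} This is Chebyshev-sharp, saturated by $h(\theta) = \cos(n\theta)/2^{n-1}$. The cleanest route to it is to complexify: writing $z = e^{i\theta}$ and $c_k = a_k - ib_k$, one obtains $2^n z^n h(\theta) = P(z^2)$, where $P(w) = \prod_k (c_k w + \bar{c}_k)$ is a polynomial of degree $n$ whose roots $-\bar{c}_k/c_k$ all lie on the unit circle and whose maximum modulus on the circle is attained at $w = 1$. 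The lemma becomes: for such $P$, every root lies at arc-distance at least $\pi/n$ from $w = 1$, with extremal case $P(w) = c(w^n + 1)$. Two structural features make this tractable: $\log|P(e^{i\theta})|$ is strictly concave between consecutive roots (with $(\log|P|)'' \leq -n/4$), and a hypothetical root $w_1$ close to $1$ can be shown to force $|P|$ to be larger at some other point on the circle (most naturally near the antipode $-w_1$, or equivalently at a point where the large factor $|w - w_1|$ combines with controlled contributions from the remaining factors via Jensen's inequality), contradicting the maximum condition. Once the lemma is proved, applying it to $h_{u_\ell}$ gives $|\theta_\ell| \geq \pi/(2n)$, hence $a_\ell \geq \sin(\pi/(2n))$ for every $\ell$, and the proof is complete.
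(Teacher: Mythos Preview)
Your overall architecture --- maximize $\prod_k|\langle v_k,v\rangle|$ on the sphere, slice through the maximizer $v^\ast$ and $v_\ell$, and reduce to an extremal problem for the degree-$n$ trigonometric polynomial $h_{u_\ell}$ --- is correct and is exactly the skeleton the paper uses. The computation that the $\ell$-th factor of $h_{u_\ell}$ vanishes at $\theta_\ell$ with $|\theta_\ell|=\arcsin a_\ell$ is right, so everything does reduce to your boxed lemma: \emph{if $h=c\prod_k\cos(\theta-\phi_k)$ attains its maximum modulus at $0$, then every zero lies at distance at least $\pi/(2n)$ from $0$.}

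The gap is that this lemma \emph{is} the theorem, and you have not proved it. Your two hints do not work. The concavity bound $(\log|P|)''\leq -n/4$ controls how fast $|P|$ falls off from its maximum; it gives no lower bound on the distance to the nearest root. The antipode/Jensen idea is also insufficient: writing $|P(1)|=|1-\zeta_1|\cdot|R(1)|$ and $|P(-\zeta_1)|=2|R(-\zeta_1)|$, you would need $|R(-\zeta_1)|$ comparable to $|R(1)|$, and nothing in the hypotheses prevents the remaining roots $\zeta_2,\dots,\zeta_n$ from clustering near $-\zeta_1$ and killing that factor. Jensen's formula only gives $\int\log|P|=0$, hence $|P(1)|\geq 1$, which is far too weak to force $\psi_1\geq\pi/n$.

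The paper handles precisely this difficulty by inserting a \emph{dual} optimization before slicing. Rather than working with the maximizer on the sphere, it maximizes $\prod|u_k|$ on the ellipsoid $u^\top H^{-1}u=n$ and passes to $w=u^{-1}$; this produces a positive matrix $M$ with $M\mathbf 1=\mathbf 1$ together with the extra condition that $c^\top M^{-1}c\geq n$ whenever $\prod|c_k|=1$. That second condition is what your primal-only approach lacks: it guarantees, after an $\alpha$-rescaling placing one zero exactly at $\pi/(2n)$, that the associated trigonometric polynomial $T$ satisfies $|T(\theta)|<1$ \emph{strictly} for $\theta\neq 0,\pi$. With strict inequality in hand, the paper runs a clean root-count: $Q(\theta)=T(\theta)-\cos(n\theta)=\sin^2\theta\cdot\psi(\theta)$ with $\deg\psi\leq n-2$, and the sign pattern at the extrema of $\cos(n\theta)$ forces $2n$ distinct zeros of $Q$ on $[0,2\pi)$, contradicting $\deg Q\leq n$. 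Without the dual step you only know $|\hat h|\leq 1$, equality can occur away from $0,\pi$, and the root-count does not close without substantial further argument that you have not supplied.
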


Jiang and Polyanskii \cite{JP2017} solved this conjecture for arbitrary collection of zones (not necessarily having all the same width). They used the classic machinery to solve plank problems: a discrete optimization as in the proof of Bang's lemma \cite{BG51} followed by an additional innovative inductive argument. The purpose of this paper is to give a new proof of Fejes T\'oth's zone conjecture \cite{FT1973} using a completely different method: inverse eigenvectors.

Inverse eigenvectors allow us to transfer a pure geometrical problem to the study of the extremal behaviour of certain class of functions. In the case of the complex plank problem, these are complex polynomials. In the case of Fejes T\'oth's conjecture, these are trigonometric polynomials.

As a second application of inverse eigenvectors we also prove the following theorem. 

\begin{theorem}\label{thm2} For any sequence $v_1,v_2,\dots,v_n$ of vectors in a real Hilbert space $\mathcal{H}$,  there exists a unit vector $v$ in $\mathcal{H}$ such that
\begin{equation}\label{ineqthm2}
	\left\Vert\sum_{k = 1}^n x_k v_k\right\Vert_\mathcal{H}^2\leq n(n-1)\sum_{k = 1}^n x_k^2 |\langle v,v_k\rangle|^2
\end{equation} 
for all $x\in \mathbb{R}^n$.
\end{theorem}

Theorem \ref{thm2} may also be regarded as a strengthening of Tarski's classic plank problem for Hilbert spaces: if $x$ is the $j$th vector of the standard basis of $\mathbb{R}^n$, inequality (\ref{ineqthm2}) becomes 
\begin{equation*}
 	1 \leq n(n-1) |\langle v_j ,v \rangle|^2
\end{equation*}
and hence 
\begin{equation*}
	|\langle v_j ,v \rangle|>\frac{1}{n}
\end{equation*}
for all $j$. 

	The basic strategy in the proofs of Theorems \ref{mainthm} and \ref{thm2} is the strategy followed by Ball in the proof the complex plank problem, but there is a fundamental difference. The main ingredient of the proof of Theorem \ref{complexplank} has no analogue in the real case. In his paper \cite{B01}, Ball studies the behaviour of a complex polynomial locally around one and, with the aid of the maximum modulus principle, manages to find a point in the unit disk where the polynomial has large absolute value. In contrast, our proofs rely on the extremal properties of trigonometric polynomials to show the existence of such a point.

\subsection{Organization of the paper} The rest of the paper is organized as follows. In Section 2 we introduce the notion of inverse eigenvector and see that Theorem \ref{mainthm} can be transform into a statement about location of inverse eigenvectors of a given real Gram matrix of unit vectors. In Section 3, we prove the inverse eigenvector version of Theorem \ref{mainthm}. Finally in Section 4, we prove Theorem \ref{thm2} following the same approach.

\section{Inverse Eigenvectors}

	In this section we introduce the notion of inverse eigenvectors. An inverse eigenvector of a matrix $M$ is a vector $x$ satisfying the equation $Mx = x^{-1}$ where $x^{-1}$ is the inverse of $x$ componentwise.

	Inverse eigenvectors arose naturally in the solution of the complex plank problem. In his paper \cite{B01}, Ball transforms the complex plank problem to a problem concerning the location of inverse eigenvectors of a complex Gram matrix. Seven years later, Leung, Li and Rakesh \cite{LLR} reformulated the problem of finding the polarization constant of $\mathbb{R}^n$ in terms of inverse eigenvectors and described the structure of the inverse eigenvectors for real positive symmetric matrices. 

	The term inverse eigenvector for a vector $x$ satisfying $Mx = x^{-1}$ turns up for the first time in \cite{Amb09}, where Ambrus used the methods in \cite{B01} to reformulate the strong polarization problem as a geometric question concerning the  location of inverse eigenvectors and managed to solve the problem for the planar case. The treatment and adaptations of \cite{B01}  and the definition of inverse eigenvectors that are presented here are due to Ambrus in \cite{Amb09} on his work on the strong polarization problem (see also \cite{LLR}). In order to motivate the definition of inverse eigenvector, let us go back to our question.

	For rest of the discussion, we shall work with the following rescaled version of Theorem \ref{mainthm} which will suit our purposes better. We also assume that $n\geq 2$ so as to eliminate from the discussion the trivial case when $n = 1$.

\begin{theorem}\label{mainp}
For any sequence $v_1,v_2,\dots,v_n$ of unit vectors in a real Hilbert space $\mathcal{H}$, there exists a vector $v\in \mathcal{H}$ of norm $\sqrt{n}$ for which 
\begin{equation*}
	|\langle v_k,v \rangle| \geq \sqrt{n}\sin(\pi/2n)
\end{equation*}
for all $k$.
\end{theorem}

	Our problem consists of finding a vector $v$ of norm $\sqrt{n}$ which has large inner product with all the vectors $v_1, v_2,\dots,v_n$. An obvious candidate for this vector $v$ would be one for which $\min_{k}{|\langle v_k,v \rangle|}$ is maximal. However, there seems to be no simple way to either manipulate or obtain useful information from this maximal condition. Instead we choose a unit vector $v$ for which the product $\prod_{i}|\langle v_i,v \rangle|$ is maximal, hoping that each of the factors will be large enough to get the desired inequality. 

	For the product, we can use simple analytic tools to study the points for which it is locally extremal. Luckily, the structure of these local optimisers can be described concisely as the following proposition shows. The following proposition can be found as discussion in the last paragraph of page 2863 in \cite{LLR} or as Proposition 1.16 in \cite{Amb09}. 

\begin{proposition}\label{motivation}
Let $v_1,v_2,\dots,v_n$ be a sequence of vectors in a real Hilbert space $\mathcal{H}$. Suppose that  $v$ is vector of norm $\sqrt{n}$ chosen so as to maximize  
\begin{equation*} 
	\prod_{k = 1}^n|\left\langle v_k,v \right\rangle|.
\end{equation*}
Then, 
\begin{equation}\label{eq1}
	v = \sum_{k = 1}^n\frac{1}{\left\langle v_k,v \right\rangle}v_k
\end{equation} 
\end{proposition}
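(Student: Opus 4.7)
The plan is to apply the method of Lagrange multipliers to the smooth function $f(v) = \prod_{k=1}^n \langle v_k, v \rangle^2$ subject to the constraint $g(v) = \|v\|^2 - n = 0$. Since $v$ maximizes $\prod_k |\langle v_k, v\rangle|$, every factor $\langle v_k, v\rangle$ must be nonzero at $v$ (otherwise the product vanishes, contradicting maximality, as one can always find a vector of norm $\sqrt{n}$ on which the product is positive). Hence $f$ is smooth at $v$, and since the squared absolute value is maximized together with the absolute value, $v$ is a critical point of $f$ on the sphere of radius $\sqrt{n}$.

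Next, I compute the gradients. From $\nabla g(v) = 2v$ and the product rule,
\begin{equation*}
\nabla f(v) = 2 f(v) \sum_{j=1}^n \frac{1}{\langle v_j, v\rangle} v_j,
\end{equation*}
so the Lagrange condition $\nabla f(v) = 2\lambda \nabla g(v)$ reads
\begin{equation*}
f(v) \sum_{j=1}^n \frac{1}{\langle v_j, v\rangle} v_j = 2\lambda\, v.
\end{equation*}

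To identify the multiplier, I take the inner product of both sides with $v$. The left-hand side becomes $f(v)\sum_{j=1}^n \langle v_j, v\rangle/\langle v_j, v\rangle = n f(v)$, while the right-hand side is $2\lambda \|v\|^2 = 2\lambda n$. Therefore $2\lambda = f(v)$, and substituting back yields
\begin{equation*}
v = \sum_{j=1}^n \frac{1}{\langle v_j, v\rangle} v_j,
\end{equation*}
which is the desired identity \eqref{eq1}. The only delicate point is confirming that the factors $\langle v_k, v\rangle$ are nonzero at the maximizer so that differentiation is legitimate; once this is secured, the argument reduces to the short Lagrange computation above.
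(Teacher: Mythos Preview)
Your argument is correct and follows essentially the same route as the paper's proof: Lagrange multipliers on the norm constraint, followed by taking the inner product with $v$ to pin down the constant. Your version is in fact a bit more careful than the paper's, since you square the objective to avoid differentiating an absolute value and you explicitly note why the factors $\langle v_k,v\rangle$ are nonzero at the maximizer.
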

\begin{proof} Since $v$ is a stationary point, by the method of Lagrange multipliers, the gradients of the objective function and the constraint should be scalar multiples of one another. Hence, there exists a real number $\lambda$ such that 
\begin{equation} \label{lag}
	v = \lambda \sum_{k = 1}^n\frac{v_k}{\left\langle v_k,v \right\rangle} \prod_{k = 1}^n|\left\langle v_k,v \right\rangle|.
\end{equation}  
This gives equation  (\ref{eq1}) up to a constant and taking inner product with $v$ shows that the constant must be 1.
\end{proof}

	Denote by $H$ the Gram matrix associated to the sequence of vectors $(v_k)_{k=1}^n$, that is, $H_{ij} =  \left\langle v_i,v_j \right\rangle$ for all $i,j$, and let $w$ be the vector in $\mathbb{R}^n$ given by  
\begin{equation*}
	w_k = \frac{1}{\left\langle v_k,v \right\rangle}
\end{equation*} 
for all $k$. Then for every $j$, 
\begin{equation*}
	(Hw)_j = \sum_{i = 1}^n h_{ji}w_i = \langle v_j,\sum_{i = 1}^n w _i v_i \rangle = \langle v_j,v \rangle = \frac{1}{w_j}.
\end{equation*}
Therefore, $w$ satisfies the equation $Hw = w^{-1}$. This observation leads us naturally to the following definition.

\begin{definition}[Ambrus \cite{Amb09}]
Let $M$ be an $n\times n$ matrix. We say that $w$ is an inverse eigenvector of $M$ if 
\begin{equation*}
	M w = w^{-1}
\end{equation*}
\end{definition}

	Although Proposition \ref{motivation} was only used as a motivation for the definition of inverse eigenvectors we will use directly the following analogous statement in terms of inverse eigenvectors themselves.

\begin{proposition}\label{motivation2}
Suppose that $M$ is an $n\times n$ symmetric positive matrix. Let $u$ be a maximizer of 
\begin{equation*}
	\prod_{k = 1}^n \left| u_k\right|
\end{equation*}
subject to the constraint
\begin{equation*}
	u^{\top}M u = n.
\end{equation*}
Then $u$ is an inverse eigenvector of $M$.
\end{proposition}

The proof of Proposition \ref{motivation2} is identical to the proof of Proposition \ref{motivation} and so we omit it. We will now show that Theorem \ref{mainp} can be deduced  easily from the following theorem concerning the location of inverse eigenvectors of a real Gram matrix.

\begin{theorem}\label{loceigen}
Let $H$ be an $n\times n$ real Gram matrix of unit vectors. Then, there exists an inverse eigenvector $w$ of $H$ for which 
\begin{equation}\label{eqnloceigen}
	\Vert w \Vert_{\infty}\leq n^{-\frac{1}{2}}\csc(\pi/2n).
\end{equation}
\end{theorem}

\begin{proof}[Theorem \ref{loceigen} implies Theorem \ref{mainp}] Let $v_1,\dots,v_n$ be a sequence of unit vectors in real Hilbert space $\mathcal{H}$ and let $H$ be the $n \times n$ real Gram matrix associated to this sequence. Let $w$ be an inverse eigenvector satisfying (\ref{eqnloceigen}) and set
\begin{equation*}
	v = \sum_{k = 1}^n w_k v_k.
\end{equation*}
Then for every $j$, 
\begin{equation*}
	\langle v_j,v\rangle  =  \sum_{k = 1}^n \langle v_j, v_k\rangle w_j =  (Hw)_j =  w_k^{-1}
\end{equation*}
and 
\begin{equation*}
	\Vert v\Vert_{\mathcal{H}}^2 = \langle v , v \rangle = \sum_{k = 1}^n \langle v, v_k\rangle w_k = n.
\end{equation*}
Finally since $w$ satifies (\ref{eqnloceigen}),
\begin{equation*}
	|\langle v_j,v\rangle| = |w_k|^{-1}\geq \sqrt{n}\sin(\pi/2n).
\end{equation*}
\end{proof}

 The proof of Theorem \ref{loceigen} will be the subject of the next section.

\section{The Proof of Theorem \ref{loceigen}}

	By a simple approximation argument we may assume that $H$ is positive definite (not
merely semi-definite). The proof may be divided in $2$ steps. First, we find a suitable inverse eigenvector $w$ of $H$. Second, we show that $w$ satisfies the desired condition, i.e.
\begin{equation}\label{secondstep}
	\Vert w \Vert_{\infty}\leq n^{-\frac{1}{2}}\csc(\pi/2n).
\end{equation}

	The next lemma deals with the first step of the proof.

\begin{lemma}\label{findivector}
For any $n \times n$ positive real Gram matrix $H$ there exists an inverse eigenvector $w$ of $H$ satisfying the following: for all $c\in \mathbb{R}^n$ such that $\prod_{k=1}^n|c_k| = 1$, 
\begin{equation}\label{opt}
	\sum_{jk} c_jw^{-1}_j(H^{-1})_{jk}w^{-1}_k c_k \geq n.
\end{equation} 
\end{lemma}
\begin{proof} Let $u$ be a maximizer of 
\begin{equation*}
	\prod_{k = 1}^n \left| u_k\right|
\end{equation*}
subject to the constraint
\begin{equation}\label{constraint}
	u^{\top}H^{-1} u = n.
\end{equation}
By Proposition \ref{motivation2}, we see that $u$ is an inverse eigenvector of $H^{-1}$ and thus $w = u^{-1}$ is an inverse eigenvector of $H$. Notice that for all vector $c$ such that  $\prod \left| c_k\right| = 1$, 
\begin{equation*}
	\prod \left| c_k u_k\right| =  \prod \left| u_k\right|,
\end{equation*}
and therefore since $u$ has being selected as to maximize $\prod \left| u_k\right|$ subject to constraint the constraint (\ref{constraint}) we have that
\begin{equation}\label{opt2}
	\sum_{jk} c_ju_j(H^{-1})_{jk} c_ku_k \geq n.
\end{equation} 
Substituting $u = w^{-1}$ in (\ref{opt2}) yields (\ref{opt}).
\end{proof}

	Having found a suitable inverse eigenvector, we move to the second step of the proof: showing that our inverse eigenvector satisfies (\ref{secondstep}). For this purpose, we will use the following fundamental lemma. We denote by $\mathbf{1}$ the vector whose entries are all equal to $1$.

\begin{lemma}\label{lemmaop}
Suppose that $M$ is an $n\times n$ symmetric positive matrix satisfying 
\begin{itemize}
	\item[$\bullet$] $M\mathbf{1} = \mathbf{1} $, and 
	\item[$\bullet$] $c^{\top}M^{-1} c \geq n$ whenever $c$ is a vector such that $\prod \left| c_k\right| = 1$.
\end{itemize}
Then 
\begin{equation*}
 m_{kk}\leq n^{-1}\csc^2(\pi/2n)
\end{equation*}
for all $k$ where $m_{kk}$ is the $k$th entry on the diagonal of $M$.
\end{lemma}

	For the sake of clarity let us first show how Theorem \ref{loceigen} is almost an immediate consequence of Lemma \ref{findivector} and Lemma \ref{lemmaop}.

\begin{proof}[Lemmas \ref{findivector} and \ref{lemmaop} imply Theorem \ref{loceigen}]
Let $w$ be an eigenvector of $H$ satisfying (\ref{opt}). We need to show that 
\begin{equation*}
	\Vert w \Vert_\infty \leq n^{-\frac{1}{2}}\csc(\pi/2n).
\end{equation*}
We define a new matrix $M$ by 
\begin{equation*}
	m_{jk} = w_j H_{jk} w_k 
\end{equation*}
for all $j,k$. 

Observe that $M$ is positive definite and its inverse is given by 
\begin{equation*}
	M^{-1}_{jk} = w^{-1}_j H^{-1}_{jk} w^{-1}_k.
\end{equation*}
Then 
\begin{equation*}
	(M\mathbf{1})_j =  w_j\sum_{k}H_{jk} w_k =  w_j (Hw)_j = 1,
\end{equation*}
where the last identity is guaranteed by the fact that $w$ is an inverse eigenvector of $H$. Thus, 
\begin{equation}\label{c1}
	M\mathbf{1} = \mathbf{1}.
\end{equation}
On the other hand, by (\ref{opt}), for any vector $c$ such that $\prod|c_k| = 1$, 
\begin{equation}\label{c2}
	c^\top M^{-1} c  = \sum_{jk} c_jw^{-1}_jH_{jk}^{-1}w^{-1}_k c_k \geq n
\end{equation}
Hence, in view of (\ref{c1}) and (\ref{c2}), $M$ satisfies both conditions of Lemma \ref{lemmaop}. Thus for all $k$ 
\begin{equation*}
	m_{kk} = |w_{k}|^2 \leq n^{-1}\csc^2(\pi/2n). 
\end{equation*}
\end{proof}

	Now it only remains to establish Lemma \ref{lemmaop}.

\begin{proof}[Proof of Lemma \ref{lemmaop}] Notice that if we let $c = Mb$ then the second condition of lemma \ref{lemmaop} states that if $\prod{|(Mb)_j| } = 1$, 
\begin{equation*}
	b^{\top} M b \geq n.
\end{equation*}
Let us assume, for a contradiction, that one of the diagonal entries is too large. Thus, without loss of generality assume that  
\begin{equation}\label{m11}
	m_{11}>\frac{1}{n\sin^2(\pi/2n)}.
\end{equation}
We will show that there is a vector $b$ such that $\prod{|(Mb)_j| } \geq 1$, but 
\begin{equation*}
	b^{\top} M b < n.
\end{equation*}
Let 
\begin{equation*}
	\mathcal{E} = \{b: b^{\top} M b = n\}.
\end{equation*}
Consider the following vector 
\begin{equation*}
	v_1 = -\sqrt{\alpha} \frac{n e_1 - \mathbf{1}}{\sqrt{nm_{11}-1}}
\end{equation*}
where 
\begin{equation}\label{alpha}
	\alpha = \frac{\cot^2(\pi/2n)}{nm_{kk}-1}.
\end{equation} 
By a simple rearrengment of (\ref{m11}) observe that $\alpha\in(0,1)$. The vector $v_1$ is inside the ellipsoid $\mathcal{E}$. In fact since $M\mathbf{1} = \mathbf{1}$,
\begin{equation*}
	v_1^{\top}Mv_1   =  \alpha \frac{n^2 e_1^{\top}M e_1 - 2 n e_1^{\top}M \mathbf{1} + n}{nm_{kk}-1} =  \alpha n. 
\end{equation*}
We also notice that $v_1$ is orthogonal to $\mathbf{1}$,
\begin{equation*}
		v_1^{\top}\mathbf{1}   = -\sqrt{\alpha} \frac{n e_1^{\top}\mathbf{1}- n}{\sqrt{nm_{11}-1}} = 0.
\end{equation*}
 For each $\theta \in [0,2\pi]$ define 
\begin{equation*}
	v_\theta  =  \cos \theta \mathbf{1} + \sin \theta v_1. 
\end{equation*}
It is easy to see that $v_\theta$ is just a parametrisation of an ellipse \emph{inside} $\mathcal{E}$. In fact for any  $\theta \in [0,2\pi)$,

	\begin{align*}
	v_\theta^\top Mv_\theta & = \cos^2\theta \mathbf{1}^{\top}M\mathbf{1} + 2 \cos \theta \sin \theta v_1^{\top}M\mathbf{1} + \sin^2 \theta v_1^{\top}M v_1\\
 						    & = n\cos^2\theta + \alpha n\sin^2 \theta.
\end{align*}
Thus for all $\theta\in [0,2\pi)\setminus\{0,\pi\}$,
\begin{equation*}
	v_\theta^\top Mv_\theta < n
\end{equation*}
and 
\begin{equation*}
	v_\theta^\top M v_\theta = n
\end{equation*}
if and only if $\theta = 0$ or $\theta = \pi$. Thus, it suffices to show that there exists $\theta\in [0,2\pi)\setminus\{0,\pi\}$ such that 
\begin{equation*}
	\prod_{j = 1}^n |(M v_\theta)_j|\geq 1.
\end{equation*}
Define the trigonometric polynomial $T_{v_1}$ by 
\begin{align*}
	 T_{v_1}(\theta) & =  \prod_{j = 1}^n (M v_\theta)_j\\
					 & =  \prod_{j = 1}^n\left(\cos \theta + (Mv_1)_j \sin\theta \right).
\end{align*}
for all $\theta\in [0,2\pi)$. Notice that since $M\mathbf{1} = \mathbf{1}$,
\begin{align*}
	(Mv_1)_1 & =  -\sqrt{\alpha}\frac{n (Me_1)_1 - (M\mathbf{1})_1}{\sqrt{nm_{11}-1}}\\
			 & =  -\sqrt{\alpha (nm_{11}-1)}\\
			 & =  -\cot(\pi/2n)
\end{align*}
where in the last identity we used (\ref{alpha}). So the first factor of $T_{v_1}$ is equal to $0$ if and only if  
\begin{equation*}
	\cos \theta =\cot(\pi/2n) \sin \theta,
\end{equation*}
which happens if and only if $\theta = \frac{\pi}{2n}$ or $ \theta = \pi + \frac{\pi}{2n}$. Hence, $T_{v_1}$  has a root at $\theta = \frac{\pi}{2n}$ and $\theta = \pi + \frac{\pi}{2n}$. Expanding the product we get
\begin{equation*}
	T_{v_1}(\theta)  =  \cos^{n} \theta + \sum_{j} (Mv_1)_j\cos^{n-1} \theta \sin \theta +\sin^{2}\theta \text{ }\psi(\theta)
\end{equation*}
where $\psi$ is a trigonometric polynomial of degree at most $n-2$. On the other hand, since $M$ is symmetric, $M\mathbf{1} = \mathbf{1}$ and $v_1$ is orthogonal to $\mathbf{1}$, 
\begin{equation*}
	\sum_{j} (Mv_1)_j = \mathbf{1}^\top M v_1 = \mathbf{1}^\top v_1 = 0
\end{equation*}
and therefore, 
\begin{equation}\label{form11}
	T_{v_1}(\theta)  = \cos^{n} \theta + \sin^{2}\theta \text{ }\psi(\theta).
\end{equation}
It is easy to see that $\cos n \theta$ is of the form (\ref{form11}). Thus taking the difference of $T_{v_1}(\theta)$ and $\cos n \theta$ we get  
\begin{align*}
	Q(\theta) & = T_{v_1}(\theta)-\cos n \theta\\
			  & = \sin^2 \theta\text{ }\psi(\theta)
\end{align*}
where $\psi$ is a trigonometric polynomial of degree at most $n-2$. 

Observe that $Q$ has roots at $0$ and $\pi$, where $T_{v_1}$ and $\cos n\theta$ are both $1$, and at $\frac{\pi}{2n}$ and $\pi+\frac{\pi}{2n}$, where both functions are equal to $0$. 
 
	For a contradiction, let us assume that 
\begin{equation*}
	|T_{v_1}(\theta)|<1
\end{equation*}
for all $\theta\in[\frac{\pi}{n},\frac{(n-1)\pi}{n}]\cup [\frac{(n+1)\pi}{n},\frac{(2n-1)\pi}{n}]$. The extrema of $\cos n \theta $ on $[0,2\pi)$ are located at $\theta_k = \frac{k\pi}{n}$ for $k \in \{0,\dots,2n-1\}$ so 
\begin{equation*}
	\text{sgn } Q(\theta_k) = (-1)^{k+1}.
\end{equation*} 
Thus, by the intermediate value theorem, for each $k\in\{1, \dots,n-2\}\cup\{n+1,\dots,2n-2\}$ there is a $\varphi_k\in (\frac{k\pi}{n},\frac{(k+1)\pi}{n})$ such that $Q(\varphi_k) = 0$. This gives us $2n-4$ additional roots of $Q$.

	Hence, $Q$ has at least $2n$ distinct roots on the interval $[0,2\pi)$. However, 
\begin{equation*}
	Q(\theta) = \sin^2(\theta)\psi (\theta),
\end{equation*}
so it could not have more than $2n-2$ distinct roots. Therefore, there exists $\theta \in [\frac{\pi}{n},\frac{(n-1)\pi}{n}]\cup [\frac{(n+1)\pi}{n},\frac{(2n-1)\pi}{n}]$ such that 
\begin{equation*}
	|T_{v_1}(\theta)| = \prod_{j } |(M v_\theta)_j|\geq 1
\end{equation*}
and $v_\theta^\top Mv_\theta<n$ which is a contradiction to the second condition of  Lemma \ref{lemmaop}.
\end{proof}

\section{The Proof of Theorem \ref{thm2}}

For the proof of Theorem \ref{thm2} we proceed in the same way as for the proof of Theorem \ref{mainp}. As we did for Theorem \ref{mainp}, we will be working with a rescaled version of Theorem \ref{thm2}.

\begin{theorem}\label{thm2s} For any sequence $v_1,v_2,\dots,v_n$ of vectors in real Hilbert space $\mathcal{H}$,  there exists a vector $v$ with norm $\sqrt{n}$ such that
\begin{equation}\label{ineqthm2s}
	\left\Vert\sum_{k=1}^n x_k v_k\right\Vert_\mathcal{H}^2\leq (n-1)\sum_{k = 1}^n x_k^2 |\langle v,v_k\rangle|^2
\end{equation} 
for all $x\in \mathbb{R}^n$.
\end{theorem}

	Now we show that Theorem \ref{thm2s} follows from the next theorem concerning location of inverse eigenvectors of a real Gram matrix.

\begin{theorem}\label{mainp2}
Let $H$ be an $n\times n$ real Gram matrix. Then, there exists an inverse eigenvector $w$ of $H$ for which 
\begin{equation}\label{eqnloceigen2}
\sum_{kj} x_kw_k H_{kj} w_j x_j\leq n-1
\end{equation}
for every unit vector $x$.
\end{theorem}

\begin{proof}[Theorem \ref{mainp2} implies Theorem \ref{thm2s}] Notice that inequality (\ref{ineqthm2s}) is equivalent to 
\begin{equation*}
	\left\Vert\sum_{k=1}^n \frac{x_k}{\langle v,v_k\rangle} v_k\right\Vert_\mathcal{H}^2 \leq n-1
\end{equation*}
for all unit vector $x$.

Let $H$ be the $n \times n$ real Gram matrix associated to a sequence of vectors $v_1,\dots, v_n$ in $\mathcal{H}$. Let $w$ be an inverse eigenvector of $H$ satisfying (\ref{eqnloceigen2}) and set
\begin{equation*}
	v = \sum_{k = 1}^n w_k v_k.
\end{equation*}
Then for every $j$, 
\begin{equation*}
	\langle v_j,v\rangle  =  \sum_{k = 1}^n \langle v_j, v_k\rangle w_j =  (Hw)_j =  w_j^{-1}
\end{equation*}
and hence
\begin{equation*}
	\Vert v\Vert_{\mathcal{H}}^2 = \langle v , v \rangle = \sum_{k = 1}^n \langle v, v_k\rangle w_k = n.
\end{equation*}
Since $w$ satisfies (\ref{eqnloceigen2}), for any unit vector $x$
\begin{equation*}
	\left\Vert\sum_{k=1}^n \frac{x_k}{\langle v,v_k\rangle} v_k\right\Vert_\mathcal{H}^2 = \sum_{kj} x_kw_k H_{kj} w_j x_j \leq n-1.
\end{equation*}
\end{proof}

	Similar to the proof of Theorem \ref{loceigen}, the proof of theorem \ref{mainp2} can be divided in two steps. The first step is to find a suitable inverse eigenvector $w$ of $H$. The second step is to show that our choice of inverse eigenvector works. To deal with the first step we use again Lemma \ref{findivector}. For the second step, we will need the following lemma.   

\begin{lemma}\label{lemmal2}
Suppose that $n \geq 2$ and  $M$ is an $n \times n$  symmetric positive matrix satisfying 
\begin{itemize}
	\item[$\bullet$] $M\mathbf{1} = \mathbf{1} $, and 
	\item[$\bullet$] $c^{\top}M^{-1} c \geq n$ whenever $c$ is a vector such that $\prod \left| c_k\right| = 1$. 
\end{itemize}
Then $\Vert M\Vert_2\leq n-1.$
\end{lemma}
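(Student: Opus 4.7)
The plan is to show $v^{\top}M^{-1}v \ge \|v\|^{2}/(n-1)$ for every $v\perp\mathbf{1}$; combined with the eigenvalue $1$ that $M^{-1}\mathbf{1}=\mathbf{1}$ gives on the $\mathbf{1}$-direction, this is equivalent to $\|M\|_{2}\le n-1$.

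First I would homogenize the second hypothesis: rescaling $c\mapsto c/(\prod_k|c_k|)^{1/n}$ turns it into the pointwise bound $c^{\top}M^{-1}c \ge n\bigl(\prod_k|c_k|\bigr)^{2/n}$ for every $c\in\mathbb{R}^n$. For a fixed unit $v\perp\mathbf{1}$ I would apply this to the test vectors $c(s)=\mathbf{1}+sv$: since $M^{-1}\mathbf{1}=\mathbf{1}$ and $v\perp\mathbf{1}$, the cross term vanishes and $c(s)^{\top}M^{-1}c(s)=n+s^{2}v^{\top}M^{-1}v$, so the hypothesis rearranges to
\[
v^{\top}M^{-1}v \;\ge\; \frac{n}{s^{2}}\!\left[\Bigl(\prod_k|1+sv_k|\Bigr)^{2/n}-1\right]\quad\text{for every }s\ne 0.
\]
The desired bound $v^{\top}M^{-1}v\ge 1/(n-1)$ therefore reduces to the purely polynomial claim that there exists $s\in\mathbb{R}$ with $P(s)^{2}\ge\bigl(1+s^{2}/[n(n-1)]\bigr)^{n}$, where $P(s)=\prod_k(1+sv_k)$.

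The constraints $\sum_k v_k=0$ and $\|v\|^{2}=1$ force $P(s)=1-s^{2}/2+O(s^{3})$, so setting $Q(s):=P(s)^{2}-(1+s^{2}/[n(n-1)])^{n}$ one checks $Q(0)=Q'(0)=0$ and $Q''(0)=-2n/(n-1)<0$; hence $s=0$ is a local maximum of $Q$ and the required non-negative value must come from a later rebound. In the extremal configuration $v=(1,-1,0,\dots,0)/\sqrt{2}$, explicit calculus locates this rebound at $s_{0}^{2}=n(n-1)\bigl[(n-1)^{2/(n-2)}-1\bigr]$, with $Q(s_{0})=0$ exactly when $n=3$ (so the bound is attained with equality) and $Q(s_{0})>0$ for $n\ge 4$.

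The main obstacle is establishing $\max_s Q(s)\ge 0$ for every admissible $v$. This seems to require either a symmetrization argument showing that concentrating the mass of $v$ onto two coordinates minimizes $\sup_s Q$, or a Chebyshev-type extremal-polynomial argument exploiting the hard constraints $e_{1}(v)=0$ and $e_{2}(v)=-1/2$. A complicating feature is that for general $v$ the cubic and higher elementary symmetric functions $e_{3}(v),\dots,e_{n}(v)$ appear in $P$ (so $Q$ is not even in $s$), forcing the argument to handle polynomial behavior of arbitrary parity. The case $n=2$ falls outside the $c=\mathbf{1}+sv$ approach (for which $Q\le 0$ throughout) and should instead be handled directly by substituting the sign vector $c=(1,-1)$ into the hypothesis, yielding $M^{-1}_{12}\le 0$ and hence $\|M\|_{2}=1=n-1$.
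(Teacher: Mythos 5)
Your reduction is sound as far as it goes: homogenizing the hypothesis to $c^{\top}M^{-1}c\ge n\bigl(\prod_k|c_k|\bigr)^{2/n}$, testing it on $c(s)=\mathbf{1}+sv$ for unit $v\perp\mathbf{1}$, and observing that $\|M\|_2\le n-1$ is equivalent to $v^{\top}M^{-1}v\ge \|v\|^2/(n-1)$ on $\mathbf{1}^{\perp}$ all match the skeleton of the paper's argument (which likewise works on the two-plane spanned by $\mathbf{1}$ and $v$ and finishes with the top eigenvector orthogonal to $\mathbf{1}$); your separate treatment of $n=2$ is also fine. But the proof is not complete: everything hinges on the claim that for every unit $v\perp\mathbf{1}$ there exists $s$ with $P(s)^2\ge\bigl(1+s^2/[n(n-1)]\bigr)^n$, and you verify this only for $v=(1,-1,0,\dots,0)/\sqrt{2}$, explicitly conceding that the general case would need a symmetrization or Chebyshev-type argument you do not supply. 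Since $Q(0)=Q'(0)=0$ and $Q''(0)<0$, the required point must lie away from $s=0$, where the higher elementary symmetric functions $e_3(v),\dots,e_n(v)$ make the behavior of $P$ essentially arbitrary a priori; this is the crux of the lemma, not a routine verification, so the proposal has a genuine gap.

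The paper closes exactly this gap by reparametrizing the same slice angularly. With $v$ normalized to lie on the ellipsoid $\{x:x^{\top}Mx=n\}$ and $v_\theta=\cos\theta\,\mathbf{1}+\sin\theta\,v$, the product $T_v(\theta)=\prod_j\bigl(\cos\theta+(Mv)_j\sin\theta\bigr)$ is a trigonometric polynomial of degree $n$, and the hypothesis--applied at \emph{every} point of the slice rather than at a single well-chosen one--gives the uniform bound $\norm{T_v}_\infty\le 1$. Two applications of Bernstein's inequality then give $n+\norm{Mv}^2=|T_v''(0)|\le\norm{T_v''}_\infty\le n^2\norm{T_v}_\infty\le n^2$, hence $\norm{Mv}^2\le n(n-1)$, which for the top eigenvector orthogonal to $\mathbf{1}$ yields $\lambda\le n-1$ at once. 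Note that under $s=\tan\theta$ your $\prod_k(1+sv_k)$ equals $\cos^{-n}\theta\prod_k(\cos\theta+v_k\sin\theta)$, so the extremal fact you are missing is a Bernstein-type statement in disguise; proving it directly for all admissible $v$ is likely no easier than making the change of variables and invoking Bernstein's inequality as the paper does.
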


For the sake of clarity let us first show how Theorem \ref{mainp2} is almost an immediate consequence of Lemma \ref{findivector} and Lemma \ref{lemmal2}.

\begin{proof}[Lemmas \ref{findivector} and \ref{lemmal2} imply Theorem \ref{mainp2}] Let $w$ be an inverse eigenvector satisfying (\ref{opt}). We need to show that
\begin{equation*}
	\sum_{kj} x_kw_k H_{kj} w_j x_j\leq n-1
\end{equation*}
for every unit vector $x$. Define a new matrix $M$ by 
\begin{equation*}
	m_{jk} = w_j H_{jk} w_k 
\end{equation*}
for all $j,k$. As we did in the previous section (see the proof of Lemmas \ref{findivector} and \ref{lemmaop} imply Theorem \ref{loceigen}), we see that $M$ satisfies both conditions of Lemma \ref{lemmal2} and therefore 
\begin{equation*}
	\sum_{kj} x_kw_k H_{kj} w_j x_j = x^\top M x  \leq \Vert M \Vert_2  \leq n-1.
\end{equation*}
for every unit vector $x$.
\end{proof}

Now it only remains to establish Lemma \ref{lemmal2}.

\begin{remark}
It must be pointed out that the proof of Lemma \ref{lemmal2} follows the same lines as the proof of Ambrus \cite{Amb09} of the strong polarization problem in the planar case: the contribution here is more a refinement of the proof by using derivatives and Bernstein's inequality which potentially could be applied to a variety of classes of functions that satisfy Bernstein-type inequalities. 
\end{remark}

\begin{proof}[Proof of Lemma \ref{lemmal2}] First notice that if we let $c = Mb$ then the second condition of the lemma can be restated as follows: $\prod{|(Mb)_k| } = 1$ implies 
\begin{equation*}
	b^{\top} M b \geq n.
\end{equation*}
Or equivalently, for any $b$ with 
\begin{equation*}
	b^{\top} M b  =n,
\end{equation*}
$\prod{|(Mb)_k| } \leq 1.$
The proof consists of looking at 2-dimensional slices of the ellipsoid defined by 
\begin{equation*}
	\mathcal{E} = \{x:x^\top M x = n\}.
\end{equation*}
So we will ``cut" $\mathcal{E}$ with subspaces of dimension $2$ of $\mathbb{R}^n$ which contain the vector $\mathbf{1}$. Thus, given a vector $v\in \mathcal{E}$ orthogonal to $\mathbf{1}$, we let $H_v$ be the 2 dimensional subspace spanned by $\mathbf{1}$ and $v$,
\begin{equation*}
	H_v = \text{span}\{\mathbf{1}, v\}.
\end{equation*}
We denote by $\mathcal{E}_v$ the ellipse we get by intersecting $\mathcal{E}$ and $H_v$, \begin{equation*}
	\mathcal{E}_v = \mathcal{E}\cap H_v.
\end{equation*}
Notice that we can parameterize the ellipse $\mathcal{E}_v$ as follows: 
\begin{equation*}
	\mathcal{E}_v =\{v_\theta:\theta\in [0,2\pi)\}
\end{equation*}
where
\begin{equation*}
	v_\theta = \cos\theta\mathbf{1} + \sin \theta v.
\end{equation*}
for all $\theta \in [0,2\pi)$. Define the trigonometric polynomial $T_v$ by 
\begin{align*}
	T_v (\theta) & = \prod_{k = 1}^n (Mv_\theta)_k\\
			     & = \prod_{k = 1}^n (\cos\theta + (Mv)_k\sin \theta)
\end{align*} 

	Notice that $T_v(0) = 1$. We now compute the first and second derivatives of $T_v$ at $0$. For any $\theta$ such that $T_v(\theta)$ is not $0$ we have
\begin{equation}\label{fd}
	\frac{T'_v (\theta)}{T_v(\theta)}  = -\sum_{k = 1}^n\frac{\sin \theta - (Mv)_k \cos\theta }{\cos\theta + (Mv)_k\sin \theta} 
\end{equation}
Evaluating equation (\ref{fd}) at $0$ yields 
\begin{equation*}
	T'_v (0) = \sum_{k = 0}^n(Mv)_k = \mathbf{1}^\top M v = \mathbf{1}^\top v = 0. 
\end{equation*}
Taking derivatives on both sides of equation (\ref{fd}) yields    
\begin{equation}\label{sd}
	\frac{T''_v(\theta)T_v(\theta)-(T'_v(\theta))^2}{T_v(\theta)^2} = -\sum_{k = 1}^n\frac{1 + (Mv)^2_k }{(\cos\theta + (Mv)_k\sin \theta)^2} 
\end{equation}
Thus, replacing $T_v(0) =1$ and $T_v'(0) =0$ in equation (\ref{sd}), we get 
\begin{equation*}
	|T_v''(0)| = n + \Vert Mv\Vert^2
\end{equation*}
We are now in a position to apply the following well known inequality for trigonometric polynomials.
\begin{theorem}[Bernstein's Inequality] For any trigonometric polynomial $T$ of degree at most $n$,   
\begin{equation}
	\Vert T'\Vert_{\infty} \leq n\Vert T\Vert_{\infty}
\end{equation}
where $\Vert T\Vert_\infty$ denotes the uniform norm of $T$ on $[0,2\pi]$.
\end{theorem}
Applying Bernstein's inequality twice, we get the following inequality for the second derivative of $T_v$,
\begin{equation}\label{Ber2}
	\Vert T''_v\Vert_\infty\leq n^2\Vert T_v\Vert_\infty.
\end{equation}
Since $v_\theta\in \mathcal{E}$, 
\begin{equation*}
	|T_v(\theta)|  = \prod_{k = 1}^n |(Mv_\theta)_k|\leq 1
\end{equation*}
for all $\theta$ and thus 
\begin{equation*}
	\Vert T_v\Vert_\infty\leq 1
\end{equation*}
for all $v\in \mathcal{E}\cap \mathbf{1}^{\top}$. Hence by inequality (\ref{Ber2}),  
\begin{equation*}
	n + \Vert Mv\Vert^2 =  |T_v''(0)|\leq \Vert T''_v\Vert_\infty\leq n^2
\end{equation*}
for all $v\in \mathcal{E}\cap \mathbf{1}^{\top}$. Therefore, 
\begin{equation}\label{ineq}
	\Vert{Mv}\Vert^2\leq n(n-1)
\end{equation}
for all $v\in \mathcal{E}\cap \mathbf{1}^{\top}$. 
Let $v\in \mathcal{E}$ be an eigenvector orthogonal to $\mathbf{1}$ associated to the possible largest eigenvalue $\lambda$. For this eigenvector $v$ we have that
	\begin{equation*}
		\Vert Mv \Vert^2 = v^\top M^\top Mv = \lambda v^\top M v  = \lambda n 
	\end{equation*}
and hence by (\ref{ineq}), $$\lambda\leq n-1.$$ The norm $ \Vert M \Vert_2$ is the maximum of $1$ and $\lambda$ which, in either case, is less than or equal to $n-1$.
\end{proof}

\section*{Acknowledgments}
The author thanks Professor Keith M. Ball for his guidance throughout the development of this research work and his numerous remarks and suggestions to improve the presentation. I would also like to thank the referee for their careful reading that led to further substantial improvement in the organization of this paper.  
\bibliographystyle{amsplain}

\bibliography{references.bib}

\end{document}